\numberwithin{equation}{section}
\def\A{{\mathcal A}}
\def\B{{\mathcal B}}
\def\C{{\mathcal C}}
\def\G{{\mathcal G}}
\def\I{{\mathcal I}}
\def\L{{\mathcal L}}
\def\M{{\mathcal M}}
\def\N{{\mathcal N}}
\def\O{{\mathcal O}}
\def\R{{\mathcal R}}
\def\S{{\mathcal S}}
\def\T{{\mathcal T}}
\def\V{{\mathcal V}}
\def\X{{\mathcal X}}
\def\Y{{\mathcal Y}}
\def\Z{{\mathcal Z}}
\newtheorem{lemma}{Lemma}[section]
\newtheorem{definition}{Definition}[section]
\newtheorem{theorem}{Theorem}[section]
\newtheorem{assumption}{Assumption}[section]
\newtheorem{corollary}{Corollary}
\newtheorem{proposition}{Proposition}[section]
\renewcommand{\thealgorithm}{\arabic{section}.\arabic{algorithm}}
\title{Linear Convergence Rate Analysis of Proximal Generalized ADMM for Convex Composite Programming}
\author{
	Han Wang\thanks{School of Mathematics and Statistics, 	Henan University, Kaifeng 475000, China (Email: whmath@126.com).}
	\and
	Yunhai Xiao\thanks{Center for Applied Mathematics of Henan Province,
		Henan University, Zhengzhou 450046,  China (Email: yhxiao@henu.edu.cn).} \href{https://orcid.org/0000-0002-7503-4585}{\includegraphics[scale=0.08]{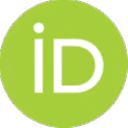}}
}
\date{\today}
\begin{document}
	\maketitle
\begin{abstract}
The proximal generalized alternating direction method of multipliers (p-GADMM)  is substantially efficient for solving convex composite programming problems of high-dimensional to moderate accuracy. 
The global convergence of this method was established by Xiao, Chen \& Li [{\tt Math. Program. Comput., 2018}], but its convergence rate was not given. 
One may take it for granted that the convergence rate could be proved easily by mimicking the proximal ADMM, but we find the relaxed points will certainly cause many difficulties for theoretical analysis.
In this paper, we devote to exploring its convergence behavior and show that the sequence generated by p-GADMM possesses Q-linear convergence rate under some mild conditions. We would like to note that the proximal terms at the subproblems are required to be positive definite, which is very common in most practical implementations although it seems to be a bit strong.
	
\end{abstract}
\textbf{Key words}: Generalized ADMM, proximal terms,  calmness, linear convergence.
\section{Introduction}
Let $\Y:=\Y_1\times \cdots\times\Y_m$ and $\Z:=\Z_1\times \cdots\times\Z_n$ be finite-dimensional real Euclidean spaces, each endowed with an inner product $ \langle \cdot,\cdot \rangle$ as well as its induced norm $\|\cdot \|$.
In this paper, we consider a canonical class of convex composite minimization problem with a separable objective function and linear constraint
\begin{equation}\label{eq:1}
	\begin{aligned}
		\min&{\;\:f(y)+g(z)}\\
		\textrm{s.t.}\:&\;\: \A^*y+\B^*z=c,
	\end{aligned}
\end{equation}
where $f:\Y\rightarrow \left ( -\infty ,+\infty  \right ]$ and $g:\Z\rightarrow \left ( -\infty ,+\infty  \right ]$ are closed proper convex but not necessarily smooth functions,  $\A:\X\rightarrow \Y$  and $\B:\X\rightarrow \Z$ are linear operators with their adjoints $\A^*$ and $\B^*$, respectively, $c\in \X$ is the given date.
Given this structure,  problem (\ref{eq:1}) arises in a
 wide variety of statistical
and machine learning problems of recent interest, including the lasso,
sparse logistic regression, basis pursuit, covariance selection, support
vector machines, and many others \cite{Boyd2011DistributedOA}.

To solve (\ref{eq:1}), a simple but powerful algorithm is the alternating direction method of multipliers (ADMM) designed originally by Glowinski \& Marroco \cite{glowinski1975approximation} and Gabay \& Mercier \cite{Gabay1976ADA}, whose construction was closely correlated with Rockafellar's work on proximal point algorithm (PPA) for solving a more general maximal monotone inclusion problem \cite{Rockafellar1976AugmentedLA,Rockafellar1976MonotoneOA}. The readers may refer to  \cite{Glowinski2014OnAD,Boyd2011DistributedOA} for  reviewing the historical development of ADMM. Starting from $(y^0,z^0,x^0)\in\Y\times\Z\times\X$, the iterative scheme of ADMM for solving (\ref{eq:1}) takes the following form, for $k=0,1,\ldots,$
\begin{equation}\label{eq:17}
	\left\{  
	\begin{array}{l}  
		y^{k+1} =   \underset{y\in\Y}{\arg\min}\;\L_\sigma (y,z^k;x^k),\\ 
		z^{k+1}  =  \underset{z\in\Z}{\arg\min}\;\L_\sigma (y^{k+1},z;x^k),\\ 
		x^{k+1}  = x^k-\tau \sigma (\A^*y^{k+1}+\B^*z^{k+1}-c),
	\end{array}  
	\right.  
\end{equation}
where $\tau\in(0,(1+\sqrt{5})/2)$ is a step-length, and $\L_\sigma (y,z;x)$ is the augmented Lagrangian function of problem (\ref{eq:1}) defined as
\begin{equation*}
	\L_\sigma (y,z;x):=f(y)+g(z)-\left\langle x,\A^*y+\B^*z-c\right\rangle +\frac{\sigma }{2}\left \| \A^*y+\B^*z-c \right \|^2\!,
\end{equation*}
where $x\in\X$ is a multiplier and $\sigma>0$ is a penalty parameter.
It is interesting to note that the iteration scheme (\ref{eq:17}) is not always well-defined,  one can  consult \cite{chen2017note} for a counter-example.

Under the existence condition of a solution to the Karush-Kuhn-Tucker (KKT) system of (\ref{eq:1}), Gabay \cite{gabay1983chapter} showed that the ADMM (\ref{eq:17}) with an unit steplength is actually equivalent to the well-known Douglas-Rachford splitting (DRs) method  \cite{Lions1979SplittingAF} to find a zero point to the stationary system coming from the dual of (\ref{eq:1}). Also,  DRs can be considered as an application of PPA, see  \cite{Eckstein1992OnTD,cai2022developments}. As a result,  Eckstein \& Bertsekas \cite{Eckstein1992OnTD} applied an accelerated technique to  (\ref{eq:17}) so as to getting a generalized variant ADMM, that is
\begin{equation}\label{eq:18}
	\left\{  
	\begin{array}{l}  
		y^{k+1} = \underset{y\in\Y}{\arg\min}\;\L_\sigma (y,z^k;x^k),\\ 
		z^{k+1}= \underset{z\in\Z}{\arg\min}\;g(z)-\left \langle \B x^k,z\right \rangle+\frac{\sigma }{2}\left \| \rho (\A^*y^{k+1}+\B^*z^k-c )+\B^*(z-z^k)\right \|^2,\\ 
		x^{k+1}=x^k-\sigma \Big(\rho (\A^*y^{k+1}+\B^*z^{k}-c)+\B^*(z^{k+1}-z^k)  \Big),
	\end{array}  
	\right.  
\end{equation}
where $\rho\in\left( 0,2\right) $ is a relaxation factor. It is quite clear to see that the  (\ref{eq:17}) with $\tau=1$ is consistent with  (\ref{eq:18}) under the setting $\rho=1$. 
The generalized ADMM retains the benefits of treating the objective functions $f$ and $g$ individually, and at the same time, it also enjoys the easiness to implement. Most importantly, a suitable $\rho$ may lead to better numerical performance. 
For the empirical studies of  the generalized ADMM, one can refer to \cite{bertsekas1982constrained,Cai2013APP,Eckstein1994ParallelAD,xiao2018generalized}. 

To make the subproblems in (\ref{eq:17}) admit unique solutions  without further assumptions on the objective functions and constraints, Eckstien \cite{Eckstein1994SomeSS}  suggested add a proximal term  to each subproblem. Later, Fazel et al. \cite{Fazel2013HankelMR} showed that these added proximal terms are not necessarily positive definite, and then proposed 
a more powerful but convenient semi-proximal ADMM (abbr. sPADMM).
The sPADMM covers the classic ADMM as a special case and has the ability to deal with multi-block convex composite semidefinite programming problems of a low to moderate accuracy \cite{li2015two,chen2017efficient,sun2015convergent}. In recent years, Xiao, Chen \& Li \cite{xiao2018generalized} introduced a variant of generalized ADMM with semi-proximal terms (p-GADMM), that is, starting from $\tilde{\omega}^0:=(\tilde{x}^0,\tilde{y}^0,\tilde{z}^0)\in\X\times\textrm{dom}f\times\textrm{dom}g$, it generates a sequence $\{(y^k,z^k,x^k)\}$ using the following frameworks
\begin{equation}\label{eq:19}
	\begin{cases}
		y^k:=\underset{y\in\Y}{\arg\min}\L_\sigma (y,\tilde{z}^{k};\tilde{x}^{k})+\frac{1}{2}\left \| y-\tilde{y}^k \right \|^2_\S,\\ 
		x^k:=\tilde{x}^k-\sigma (\A^*y^k+\B^*\tilde{z}^{k}-c),\\
		z^k:=\underset{z\in\Z}{\arg\min}\L_\sigma (y^{k},z;x^k)+\frac{1}{2}\left \| z-\tilde{z}^k \right \|^2_\T,\\
		\tilde{\omega}^{k+1}:=\tilde{\omega}^{k}+\rho (\omega^{k}-\tilde{\omega}^{k}).
	\end{cases}
\end{equation}
In fact, this variant is based on an important observation of Chen \cite{chen2012numerical} that the generalized ADMM (\ref{eq:18}) can be reformulated as an ADMM with an extra relaxation step with  factor lying in  $(0,2)$.  
By comparison with  \cite{lu2018convergence}, the semi-proximal terms  in (\ref{eq:19}) is more natural and pretty in sense that it used the most recent values of variables.  
Moreover, extensive numerical experiments on a class of linearly doubly non-negative semidefinite programming problems illustrated that the variant of generalized ADMM (\ref{eq:19}) performed more effectively and  efficiently \cite{xiao2018generalized}. 

It has been proved that the sequence generated by p-GADMM converges globally to the KKT point of (\ref{eq:1}) under some mild conditions. However, its convergence rate  was not given. 
In fact, the convergence rate analysis on ADMM and its related variants have been studied by  in different contexts. For example, under only an error bound condition, Han, Sun \& Zhang \cite{Han2018LinearRC} established the linear rate convergence rate of sPADMM of Fazel et al. \cite{Fazel2013HankelMR} with $\tau\in\left ( 0,\left ( 1+\sqrt{5} \right )/2 \right )$.
For another example, Fang et al. \cite{fang2015generalized} derived the linear convergence rate of a linearized variant of generalized ADMM and proved the worst-case $\O\left( 1/k\right)$ iteration complexity in both ergodic and nonergotic cases. 
This result was further improved by Wang et al. \cite{wang2022linear}, in which they showed that the Q-linear convergence result for generalized ADMM (\ref{eq:18}) hold if the proximal terms are positive and semidefinite.
But despite these achievements, the convergence rate of p-GADMM with respect to (\ref{eq:19}) is not trivial because the relaxed iterative points $(\tilde{x}^k,\tilde{y}^k,\tilde{z}^k)$ would certainly cause many difficulties.
The paper concentrates on  theoretical analysis to prove that the sequence $\{(y^k,z^k)\}$ generated by p-GADMM possesses Q-linear convergence rate under the condition of calmness.

The remaining parts of this paper are organized as follows. Section \ref{sec2} is divided into two subsections, Subsection \ref{sec2.1} presents some  results on the optimimality conditions for problem (\ref{eq:1}), and Subsection \ref{subsec2.2} briefly overviews the definitions and properties associated with calmness in variational analysis.  Section \ref{sec3} is the main part of this paper, in which,  we derive the local linear convergence results for p-GADMM under some certain assumption conditions. Finally, we conclude the paper in Section \ref{sec4}.

\section{Preliminaries}\label{sec2}
The section presents some notations and basic concepts appeared in the context, and summarizes some useful  preliminaries used for later analysis.
\subsection{Notations and basic concepts}\label{sec2.1}
For any two vectors $x\in \mathbb{R}^n$ and $y\in\mathbb{R}^m$, we use $(x,y)$ to denote their adjunction, i.e., $(x,y)\in\mathbb{R}^{m+n}$. For $p\geq1$, we use $\left \| x \right \|_p$ to denote an $\ell_p$-norm of a vector $x$, and for $p=2$, we simply denote it as $ \| x \|$. For any  symmetric and positive definite matrix $\O$, the $\O$-norm of $x$ is denoted by $\|x\|_\O:=\sqrt{x^\top\O x}$. For a given closed convex set $\C$, the distance of $x$ to $\C$ regarding $\O$-norm is denoted as $\text{dist}_\O(x,\C):=\textrm{inf}_{y\in \C}\| x-y\|_\O$. Give $f: \Y \rightarrow(-\infty,+\infty]$ be a proper closed convex function.
We use $\text{dom}f$ to denote the domain of $f$, that is, $\text{dom}f=\{y \in \Y\mid f(y)<\infty\}$. The proximal mapping of $f$ with  $t >0$ is defined by
$$
\operatorname{Prox}_{f}(x) :=\underset{y \in \Y}{\operatorname{argmin}}\Big\{f(y)+\frac{1}{2}\|y-x\|_2^{2}\Big\}, \quad \forall x \in \Y.
$$

The Lagrangian function of problem (\ref{eq:1}) is defined by 
\begin{equation}\label{eq:2}
	\L(y,z;x):=f(y)+g(z)-\left \langle x,\A^*y+\B^*z-c \right \rangle,\; \; \; \forall (y,z,x)\in\Y\times\Z\times \X,
\end{equation}
which is convex in $(y,z)\in\Y\times\Z$ and concave in $x\in\X$. The Slater constraint qualification for problem (\ref{eq:1}) is said to be held  if
$$
\Big \{ (y,z)\, |\, y\in\textrm{ri}(\textrm{dom}f),\, z\in\textrm{ri}(\textrm{dom}g), \, \A^*y+\B^*z=c \Big \}\neq \varnothing, 
$$
where $\text{ri}(\cdot)$ denotes the relative interior of a convex set.
Under this constraint qualification, from \cite[Corollaries 28.2.2 and 28.3.1]{rockafellar1970convex}, we know that $( \bar{y},\bar{z})\in \textrm{ri}(\textrm{dom}f\times \textrm{dom}g)$ is an optimal solution to  problem (\ref{eq:1}) if and only if there exists a Lagrange multiplier  $\bar{x}\in \X$ such that $(\bar{y},\bar{z},\bar{x})$ is a solution to the following KKT system:
\begin{equation}\label{eq:3}
	\A{x}\in \partial f({y}),\; \; \B{x}\in \partial g({z})\; \; \textrm{and}\; \; \A^*{y}+\B^*{z}-c=0,
\end{equation}
where  $\partial f$ and $\partial g$ are the  subdifferential mappings of $f$ and $g$ in (\ref{eq:1}), respectively. Noting that the subdifferential mappings $\partial f$ and $\partial g$ are maximal monotone \cite[Theorem 12.17]{Rockafellar1998VariationalA}, then for all $y, y'\in \textrm{dom} f$,  $\xi \in \partial f(y) $ and $ {\xi }'\in \partial f({y}')$, it holds
\begin{equation*}
	\langle {\xi }'-\xi, {y}'-y\rangle\geq  \|  {y}'-y\|_{\Sigma _f}^2, 
\end{equation*}
and for all $z, z'\in \textrm{dom} g$,  $\zeta  \in \partial g(z) $ and ${\zeta  }'\in \partial g({z}')$, it holds
\begin{equation*}
	\langle  {\zeta  }'-\zeta , {z}'-z\rangle  \geq \|  {z}'-z \|_{\Sigma _g}^2,
\end{equation*}
where  $\Sigma _f:\Y\rightarrow\Y$ and $\Sigma _g:\Z\rightarrow\Z$ are self-adjoint and positive semidefinite linear operators.

Let $\bar{\Omega }:=(\bar{y},\bar{z},\bar{x})$ be the  solution set to the KKT system (\ref{eq:3}). The nonempty of $\bar{\Omega}$ can be guaranteed under the setting of a certain constraint qualification condition. 
In this paper, we only assume the existence of KKT point, and do not particularly emphasize which qualification is used.

\begin{assumption}\label{assumption1}
	The KKT system (\ref{eq:2}) has a nonempty solution set, i.e., $\bar{\Omega}\neq\varnothing$.
\end{assumption}

In order to facilitate the subsequent theoretical analysis, we let $\nu :=(x,y,\tilde{y}-y,z,\tilde{z}-z)\in \V:=\X \times \Y \times \Y \times \Z \times \Z$ for given $\tilde{y}\in\Y$ and $\tilde{z}\in\Z$, and define a KKT mapping $\hat{\R}:\V\rightarrow \V$ in the form of
\begin{equation}\label{eq:22}
	\hat{\R}(\nu):=\begin{pmatrix}
		\A^*y+\B^*z-c\\
		y-\textrm{Prox}_f(y+\A x)\\
		0\\
		z-\textrm{Prox}_g(z+\B x)\\
		0 
	\end{pmatrix},\qquad\forall\nu\in \V,
\end{equation}
where $\textrm{Prox}_f( \cdot)$ represents the proximal mapping of a closed convex proper function $f$. Define $\bar{\nu}:=(\bar{x}, \bar{y}, 0, \bar{z}, 0)$ such that $(\bar{x}, \bar{y}, \bar{z})\in\bar{\Omega}$. For simplicity, we denote a generalized optimal solutions set $\bar{\Theta}:=\left \{ (0,0) \right \}\cup \bar{\Omega }$, which means that $(\bar{x}, \bar{y}, \bar{z})\in\bar{\Omega}$ if and only if $\bar{\nu}\in\bar{\Theta}$. By optimization theory, we know that the proximal mappings $\textrm{Prox}_f( \cdot)$ and $\textrm{Prox}_g ( \cdot )$ are globally Lipschitz continuous with modulus one. Then, the mapping $\hat{\R}(\cdot)$ is continuous on $\V$ and $\hat{\R}(\bar\nu)=0$ if and only if $\bar{\nu}\in\bar{\Theta}$.

\subsection{Locally upper Lipschitzian and calmness}\label{subsec2.2}
Given a set-valued function, say $\Psi$, from $\X$ to $\Y$,  
the graph of $\Psi$ is defined as 
$\Gamma _\Psi:=\left \{ (x,y)\in \X\times\Y \ | \ y\in\Psi(x) \right \}$. For convenience, we denote $\mathbf{B}_\Y$ as an Euclidean unit ball, i.e., $\mathbf{B}_\Y:=\left \{ y\in\Y|\left \| y \right \|\leq 1 \right \}$.
\begin{definition}[\cite{robinson1981some}]
	The set-valued mapping $\Psi : \X \rightrightarrows\Y$ is said to be locally upper Lipschitzian at a point $x_0\in\X$ with modulus $\lambda$, if for some neighborhoods $\N$ of $x_0$ and for all $x\in \N$ such that
	\[\Psi (x)\subseteq \Psi(x_0)+\lambda \left \| x-x_0 \right \|\mathbf{B}_\Y,   \:\:\:\forall x\in \N.\]
\end{definition}
\noindent The set-valued mapping $\Psi$ is called piecewise polyhedral if its graph $\Gamma _\Psi$ is the union of finitely many polyhedral sets. The elementary relationship between the locally upper Lipschitzian and the piecewise polyhedral for a set-valued mapping $\Psi$ is stated as follows:
\begin{proposition}[\cite{robinson1981some}]
	Let set-valued mapping $\Psi$ be piecewise polyhedral from $\X$ into $\Y$, then there 
	exists a constant $\lambda$ such that $\Psi$ is locally upper Lipschitzian at each $x_0\in \X$ independent of the choice of $x_0$.
\end{proposition}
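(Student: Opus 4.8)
The plan is to follow Robinson's original argument, whose engine is Hoffman's error bound for systems of linear inequalities. First I would exploit the piecewise polyhedral hypothesis to reduce the claim to the case of a single polyhedral piece. Writing $\Gamma_\Psi=\bigcup_{i=1}^N P_i$ with each $P_i\subseteq\X\times\Y$ polyhedral, I introduce the ``slice'' multifunctions $\Psi_i(x):=\{y\in\Y : (x,y)\in P_i\}$, so that $\Psi(x)=\bigcup_{i=1}^N\Psi_i(x)$. If each $\Psi_i$ is locally upper Lipschitzian at every point with a modulus $\lambda_i$ independent of the base point, then since $\Psi_i(x_0)\subseteq\Psi(x_0)$ (because $P_i\subseteq\Gamma_\Psi$), taking $\lambda:=\max_i\lambda_i$ and intersecting the finitely many neighborhoods yields the result. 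One subtlety to dispatch here is the possibility that $\Psi_i(x_0)=\varnothing$: since the projection of a polyhedron onto $\X$ is again polyhedral and hence closed, $\dom\Psi_i$ is closed, so whenever $x_0\notin\dom\Psi_i$ the value $\Psi_i$ vanishes on a whole neighborhood of $x_0$ and the inclusion is vacuous.

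Next I would treat a single polyhedral slice. Represent $P=\{(x,y): Cx+Dy\le b\}$ for suitable matrices $C,D$ and vector $b$ (equalities split into paired inequalities), so that $\Psi_P(x)=\{y: Dy\le b-Cx\}$. Fix $x_0$ with $\Psi_P(x_0)\ne\varnothing$, take any $x$ and any $y\in\Psi_P(x)$, so $Dy\le b-Cx$. The goal is to produce $y_0\in\Psi_P(x_0)$ with $\|y-y_0\|\le\lambda_P\|x-x_0\|$. Here Hoffman's lemma, applied to the linear system $Dy_0\le b-Cx_0$ in the unknown $y_0$, supplies a constant $\kappa$ depending only on the coefficient matrix $D$, such that $\dist(y,\Psi_P(x_0))\le\kappa\,\|(Dy-(b-Cx_0))_+\|$, where $(\cdot)_+$ is the componentwise positive part. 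Since $Dy\le b-Cx$, I can bound $Dy-(b-Cx_0)\le C(x_0-x)$ componentwise, whence $\|(Dy-(b-Cx_0))_+\|\le\|C\|\,\|x-x_0\|$, giving $\dist(y,\Psi_P(x_0))\le\kappa\|C\|\,\|x-x_0\|$. Thus every $y\in\Psi_P(x)$ lies within $\kappa\|C\|\,\|x-x_0\|$ of $\Psi_P(x_0)$, i.e.\ $\Psi_P(x)\subseteq\Psi_P(x_0)+\lambda_P\|x-x_0\|\mathbf{B}_\Y$ with $\lambda_P:=\kappa\|C\|$. Note this holds for all $x$, not merely near $x_0$, and $\lambda_P$ depends on neither $x_0$ nor $b$; collecting the pieces, $\lambda:=\max_i\lambda_{P_i}$ is a single modulus valid at every $x_0\in\X$.

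The heart of the matter, and the step I expect to be the main obstacle, is the uniform version of Hoffman's bound: that the error-bound constant $\kappa$ for $\{y:Dy\le d\}$ can be chosen independently of the right-hand side $d$. This is exactly what decouples the modulus from the base point $x_0$ and makes the final $\lambda$ uniform. I would either invoke Hoffman's lemma in this matrix-dependent form as a known result, or prove it by the standard route: express $\dist(y,\{y':Dy'\le d\})$ as the value of a linear program, pass to its dual, and observe that a dual optimum is attained at one of the finitely many vertices of a polyhedral cone determined solely by $D$, so that the optimal value is controlled by the maximum over those finitely many vertices times $\|(Dy-d)_+\|$. This uniformity over $d$, transported through the substitution $d=b-Cx_0$, is precisely the independence of $\lambda$ from the choice of $x_0$ asserted in the proposition.
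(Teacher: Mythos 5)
Your proposal is correct and is essentially the argument behind the cited result: the paper itself gives no proof, quoting the proposition directly from Robinson~\cite{robinson1981some}, whose original proof rests exactly on the reduction to polyhedral pieces and the right-hand-side--uniform Hoffman error bound that you use. Your handling of the two subtleties --- emptiness of a slice $\Psi_i(x_0)$ via closedness of the polyhedral projection $\dom\Psi_i$, and the independence of the Hoffman constant from $b-Cx_0$ --- is precisely what makes the modulus $\lambda$ uniform in $x_0$, so nothing is missing.
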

It was known from \cite[definition 10.20]{Rockafellar1998VariationalA} that, if a set-valued function $\Psi$ is called piecewise linear-quadratic, then dom$\Psi$ can be represented as the union of finitely many polyhedral sets, relative to each of which $\Psi$ is either given by an expression of affine or quadratic function. 
Meanwhile, $\Psi$ is piecewise linear-quadratic if and only if  the subdifferential mapping $\partial\Psi$ is piecewise polyhedral. The proof and its extensions can be found at the monograph \cite[proposition 12.30]{Rockafellar1998VariationalA}.
 
We now ready to state the definition of calmness for $\Psi : \X \rightrightarrows\Y$ at $x_0$ for $y_0$ with $(x_0,y_0)\in \Gamma _\Psi$. For more details, one can see the disquisition of Dontchev \& Rockafellar \cite{dontchev2009implicit} and Rockafellar \& Wets \cite{Rockafellar1998VariationalA}.
 \begin{definition}[\cite{dontchev2009implicit}]\label{def1}
 	A set-valued mapping $\Psi : \X \rightrightarrows\Y$ is called to be calm at $x_0$ for $y_0$ if $(x_0,y_0)\in \Gamma _\Psi$ and there exists a constant $\lambda$ along with neighborhoods $\N$ of $x_0$ and $\M$ of $y_0$ such that
 	\[\Psi(x)\cap \M\subseteq \Psi(x_0)+\lambda \left \| x-x_0 \right \|\mathbf{B}_\Y,\:\:\:\forall x\in \N.\]
 \end{definition}
\noindent As can be seen from this definition that, suppose $\Psi$ be the subdifferential mapping of a piecewise linear-quadratic function, then $\Psi$ is calm at $x^0$ for $y^0$ meeting $(x_0,y_0)\in\Gamma _\Psi$ with modulus $\lambda\geq  0$ independent of the selection of $(x_0,y_0)$. At last, a set-valued mapping $\Psi : \X \rightrightarrows\Y$ is called metrically subregular at $x_0$ for $y_0$ if $(x_0,y_0)\in \Gamma _\Psi$ and there exists a constant $\iota\geq0$ along with neighborhoods $\N$ of $x_0$ and $\M$ of $y_0$ such that
$$
\textrm{dist} (x,\Psi ^{-1}(y_0))\leq \iota \, \textrm{dist}(y_0,\Psi(x)\cap \M),\:\:\:\forall x\in \N,
$$
which is a.k.s.  error bound condition.
To end this section,  we list the following  result  to reveal the equivalence of metric subregularity of a set-valued mapping with calmness of its inverse. For its proof, one can refer to \cite[Theorem 3H.3]{dontchev2009implicit}.
\begin{proposition}[\cite{dontchev2009implicit}]
	For a set-valued mapping $\Psi : \X \rightrightarrows\Y$, let $(x_0,y_0)\in \Gamma _\Psi$. Then $\Psi$ is metrically subregular at $x_0$ for $y_0$ with a constant $\lambda$ if and only if its inverse $\Psi^{-1}: \Y \rightrightarrows\X$ is calm at $y_0$ for $x_0$ with the same constant $\lambda$.
\end{proposition}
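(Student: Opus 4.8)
The plan is to read both conditions as a single pointwise distance estimate and then exploit the graph symmetry $\Gamma_{\Psi^{-1}}=\{(y,x)\mid(x,y)\in\Gamma_\Psi\}$, so that metric subregularity of $\Psi$ and calmness of $\Psi^{-1}$ become literally the same inequality with the roles of the two variables interchanged. The only bridge needed is the elementary equivalence $x\in\Psi^{-1}(y)\Longleftrightarrow y\in\Psi(x)$. Throughout I keep the neighborhoods $\N$ of $x_0$ and $\M$ of $y_0$ fixed, shrinking them when necessary, and I use the convention $\textrm{dist}(y_0,\varnothing)=+\infty$, which makes the subregularity inequality hold vacuously whenever $\Psi(x)\cap\M=\varnothing$.

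For the direction ``$\Psi^{-1}$ calm $\Rightarrow$ $\Psi$ metrically subregular'', I would suppose calmness holds with constant $\lambda$ and neighborhoods $\M,\N$, fix an arbitrary $x\in\N$, and take any $y\in\Psi(x)\cap\M$. Then $x\in\Psi^{-1}(y)\cap\N$ and $y\in\M$, so calmness produces a point $u\in\Psi^{-1}(y_0)$ with $\|x-u\|\le\lambda\|y-y_0\|$, whence $\textrm{dist}(x,\Psi^{-1}(y_0))\le\lambda\|y-y_0\|$. Since this holds for every admissible $y$, taking the infimum over $y\in\Psi(x)\cap\M$ yields $\textrm{dist}(x,\Psi^{-1}(y_0))\le\lambda\,\textrm{dist}(y_0,\Psi(x)\cap\M)$ for all $x\in\N$, which is exactly metric subregularity with the same constant $\lambda$.

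For the converse, I would suppose $\Psi$ is metrically subregular with constant $\iota$ and neighborhoods $\N,\M$, fix $y\in\M$, and take any $x\in\Psi^{-1}(y)\cap\N$. Then $y\in\Psi(x)\cap\M$, so $\textrm{dist}(y_0,\Psi(x)\cap\M)\le\|y-y_0\|$, and subregularity gives $\textrm{dist}(x,\Psi^{-1}(y_0))\le\iota\,\textrm{dist}(y_0,\Psi(x)\cap\M)\le\iota\|y-y_0\|$. Passing from this distance bound to the inclusion $x\in\Psi^{-1}(y_0)+\iota\|y-y_0\|\mathbf{B}_\X$ required by Definition \ref{def1}, and then collecting these inclusions over all $x\in\Psi^{-1}(y)\cap\N$ and $y\in\M$, delivers calmness of $\Psi^{-1}$ at $y_0$ for $x_0$ with the same constant $\iota=\lambda$.

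The only genuinely delicate points, and hence where I would concentrate, are the bookkeeping of neighborhoods — one must verify that a single pair $\N,\M$ can serve both conditions, shrinking $\M$ (respectively $\N$) if needed so that it lies inside the neighborhood furnished by the other property — and the final passage from the distance inequality $\textrm{dist}(x,\Psi^{-1}(y_0))\le\iota\|y-y_0\|$ to the set-containment form of calmness, which is valid once $\Psi^{-1}(y_0)$ is closed so that the defining infimum is attained (otherwise one replaces $\Psi^{-1}(y_0)$ by its closure). Everything else is a verbatim translation through $y\in\Psi(x)\Leftrightarrow x\in\Psi^{-1}(y)$, so I expect no essential obstacle beyond these two technical matters.
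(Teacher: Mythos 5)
The paper offers no proof of this proposition at all --- it simply points to \cite[Theorem 3H.3]{dontchev2009implicit} --- and your argument is precisely the standard proof found in that reference: via $x\in\Psi^{-1}(y)\Leftrightarrow y\in\Psi(x)$ the two properties are one and the same estimate read through the graph symmetry, and in fact a single pair of neighborhoods $\N$, $\M$ serves both directions with no shrinking needed, since the neighborhood of $x_0$ plays the domain role in subregularity and the truncation role in calmness of $\Psi^{-1}$, and vice versa for $\M$. Both of your flagged technical points resolve as you anticipate: the direction from calmness to subregularity needs only the convention $\textrm{dist}(y_0,\varnothing)=+\infty$ together with taking the infimum over $y\in\Psi(x)\cap\M$ (the left-hand side being independent of $y$), while the converse passage from $\textrm{dist}\bigl(x,\Psi^{-1}(y_0)\bigr)\leq \iota\|y-y_0\|$ to the inclusion $x\in\Psi^{-1}(y_0)+\iota\|y-y_0\|\mathbf{B}_\X$ does require attainment of the distance, which holds whenever $\Psi^{-1}(y_0)$ is closed (and is automatic in this paper's application, where the relevant set is $\hat{\R}^{-1}(0)$, closed by continuity of $\hat{\R}$); for a non-closed value one would indeed only recover the constant up to closure or up to $\iota+\epsilon$, exactly as you note, and the case $y=y_0$ is trivial since then $x\in\Psi^{-1}(y_0)$ already. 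So your proposal is correct with the same constant, and it coincides with the argument in the source the paper cites rather than departing from it.
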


\section{Linear convergence rate}\label{sec3}
In this section, we present a general convergence rate analysis on algorithm p-GADMM. 
It should be noted that the steps of p-GADMM has been fully stated in 
\cite{xiao2018generalized}, but for the convenience of the subsequent analysis, we adjust the updating order and the upper script here.

\begin{algorithm}
	\renewcommand{\thealgorithm}{1}
	\caption{p-GADMM}\label{alg:2}
	\begin{itemize}
		\item[Step 0.] Let $\sigma \in(0,+\infty)$ and $\rho \in (0,2)$ be given parameters. Let $\S$ and $\T$ be  self-adjoint positive definite linear operators defined on $\Y$ and $\Z$, respectively. Choose $(\tilde{x}^0,\tilde{y}^0,\tilde{z}^1)\in\X\times\textrm{dom}f\times \textrm{dom}g$.		
		\item[Step 1.] Compute
		\begin{equation*}
			\begin{cases}
				y^0:=\underset{y\in\Y}{\arg\min}\L_{\sigma }(y,\tilde{z}^1;\tilde{x}^0)+\frac{1}{2}\left \| y-\tilde{y}^0 \right \|^2_\S,\\ 
				x^0:=\tilde{x}^0-\sigma (A^*y^0+B^*\tilde{z}^1-c).
			\end{cases}
		\end{equation*}
		\item[Step 2.]  For $k=1,2,3,\ldots$, do the following steps iteratively:
		\begin{subequations}
			\begin{numcases}{} 
				z^k:=\underset{z\in\Z}{\arg\min}\L_\sigma (y^{k-1},z;x^{k-1})+\frac{1}{2}\left \| z-\tilde{z}^k \right \|^2_\T,\label{eq:a}\\
				\tilde{y}^k:=\tilde{y}^{k-1}+\rho (y^{k-1}-\tilde{y}^{k-1}),\label{eq:b}\\
				\tilde{x}^k:=\tilde{x}^{k-1}+\rho (x^{k-1}-\tilde{x}^{k-1}),\label{eq:c}\\
				\tilde{z}^{k+1}:=\tilde{z}^{k}+\rho (z^{k}-\tilde{z}^{k}),\label{eq:d}\\
				y^k:=\underset{y\in\Y}{\arg\min}\L_\sigma (y,\tilde{z}^{k+1};\tilde{x}^{k})+\frac{1}{2}\left \| y-\tilde{y}^k \right \|^2_\S,\label{eq:e}\\ 
				x^k:=\tilde{x}^k-\sigma (\A^*y^k+\B^*\tilde{z}^{k+1}-c). \label{eq:f}
			\end{numcases}
		\end{subequations}
		\item[Step 3.] If a termination criterion is not met, set $k:=k+1$ and go to Step 2.
	\end{itemize}
\end{algorithm}

From \cite[Lemma 5.2, Theorem 5.1]{xiao2018generalized}, it is a trivial task to get the following result which provides some useful highlights for the further convergence rate analysis.
\begin{theorem}\label{th:1}
	Suppose that the KKT system (\ref{eq:3}) is nonempty. Let the sequence $\{(x^k, y^k, z^k; \tilde{x}^k,\tilde{y}^k,\tilde{z}^k) \}$ be generated by Algorithm \ref{alg:2}. Then the following results hold:
	\begin{itemize}
		\item [(i)] For any $k\geq0$,
		\begin{equation}\label{eq:4}
			\begin{split}
			&(\sigma \rho )^{-1}\left \| x_e^k+\sigma (1-\rho ) \A^*y_e^k\right \|^2+\rho ^{-1}\left \| \tilde{y}_e^{k+1} \right \|_\S^2+\rho ^{-1}\left \| \tilde{z}_e^{k+1} \right \|_\T^2\\[2mm]
			&+(2-\rho )\left \| y^k-\tilde{y}^k \right \|_S^2+\left ( 2-\rho  \right )\sigma \left \| \A^*y_e^k \right \|^2\\[2mm]
			\geq &(\sigma \rho )^{-1}\left \| x_e^{k+1}+\sigma (1-\rho ) \A^*y_e^{k+1}\right \|^2+\rho ^{-1}\left \| \tilde{y}_e^{k+2} \right \|_S^2+\rho ^{-1}\left \| \tilde{z}_e^{k+2} \right \|_\T^2\\[2mm]
			&+(2-\rho )\left \| y^{k+1}-\tilde{y}^{k+1} \right \|_\S^2+\left ( 2-\rho  \right )\sigma \left \| \A^*y_e^{k+1} \right \|^2\\[2mm]
			&+2\left \| y_e^{k+1} \right \|_{\Sigma _f}^2+2\left \| z_e^{k+1} \right \|_{\Sigma _g}^2+(2-\rho )\sigma \left \| \A^*y_e^{k+1} +\B^*z_e^{k+1}\right \|^2\\[2mm]
			&+(2-\rho )\left \| \tilde{y}^{k+1}-y^{k+1} \right \|_S^2+(2-\rho )\left \| \tilde{z} ^{k+1}-z^{k+1}\right \|_\T^2+\sigma \rho ^{-1}(2-\rho )^2\left \| \A^*(y^{k+1}-y^k) \right \|^2,
		    \end{split}
		\end{equation}
	where $x_e=x-\bar{x}$, $y_e=y-\bar{y}$, and $z_e=z-\bar{z}$.
	\item[(ii)] Assume that both $\S$ and $\T$ be chosen such that $\Sigma _f+\S+\sigma\A\A^*\succ 0$ and $\Sigma _g+\T+\sigma\B\B^*\succ 0$, then the sequence $\left \{ (x^k, y^k, \tilde{y}^k-y^k, z^k, \tilde{z}^k-z^k)\right \}$ is automatically well-defined, and it converges to $(\bar{x}, \bar{y}, 0, \bar{z}, 0)\in\bar{\Theta}$.
		\end{itemize}
\end{theorem}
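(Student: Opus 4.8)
The plan is to treat part~(i) as the workhorse---a single-step energy estimate---and to read off part~(ii) from it by a standard Fej\'er-type argument. Since the statement is advertised as a direct consequence of \cite[Lemma~5.2, Theorem~5.1]{xiao2018generalized}, I would essentially transcribe their one-step analysis into the reordered iteration of Algorithm~\ref{alg:2}, keeping careful track of the relaxation step.

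For part~(i), I would begin from the first-order optimality conditions of the two subproblems. Writing out the stationarity condition of the $z$-update~(\ref{eq:a}) produces an inclusion in $\partial g(z^k)$, and that of the $y$-update~(\ref{eq:e}) an inclusion in $\partial f(y^k)$; the multiplier relations~(\ref{eq:f}) and~(\ref{eq:c}) then let me rewrite these purely in terms of the iterates. I would pair each inclusion against the KKT relations~(\ref{eq:3}) at a fixed $(\bar{y},\bar{z},\bar{x})\in\bar{\Omega}$ and invoke the strong-monotonicity inequalities $\langle \xi'-\xi,\,y'-y\rangle\ge\|y'-y\|_{\Sigma_f}^2$ and its $g$-counterpart recorded in Subsection~\ref{sec2.1}. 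This produces the quadratic lower bounds $\|y_e^{k}\|_{\Sigma_f}^2$ and $\|z_e^{k}\|_{\Sigma_g}^2$ appearing on the right of~(\ref{eq:4}). The remaining cross terms I would collapse using the polarization identity $\|a\|_M^2-\|b\|_M^2=2\langle a-b,\,b\rangle_M+\|a-b\|_M^2$ for the operators $M\in\{\S,\T,\sigma\A\A^*\}$, substituting the relaxation recursions~(\ref{eq:b})--(\ref{eq:d}) to convert differences of relaxed points into differences of the genuine iterates. Gathering the squared norms so that the mixed multiplier-plus-feasibility block $(\sigma\rho)^{-1}\|x_e+\sigma(1-\rho)\A^*y_e\|^2$ telescopes from step $k$ to step $k+1$ is what yields~(\ref{eq:4}).

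For part~(ii), the positive-definiteness hypotheses do two jobs. First, $\Sigma_f+\S+\sigma\A\A^*\succ0$ makes the objective of the $y$-subproblem strongly convex, its effective curvature being exactly this operator, so~(\ref{eq:e}) has a unique minimizer; the analogous statement with $\Sigma_g+\T+\sigma\B\B^*\succ0$ handles~(\ref{eq:a}), giving well-definedness. Second, the left-hand side of~(\ref{eq:4}) is manifestly nonnegative for $\rho\in(0,2)$ with $\S,\T\succeq0$, so denoting it $\phi_k$ we have a monotone nonincreasing sequence $\phi_{k}\ge\phi_{k+1}\ge0$; hence $\{\phi_k\}$ converges and the surplus terms on the right of~(\ref{eq:4}) are summable. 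Consequently the residuals $\|y_e^{k}\|_{\Sigma_f}$, $\|z_e^{k}\|_{\Sigma_g}$, $\|\A^*y_e^{k}+\B^*z_e^{k}\|$, $\|\tilde{y}^{k}-y^{k}\|_\S$, $\|\tilde{z}^{k}-z^{k}\|_\T$ and $\|\A^*(y^{k+1}-y^{k})\|$ all tend to zero. Boundedness of $\phi_k$ bounds the iterates, so along a subsequence they converge; feeding the vanishing residuals into the continuous KKT map $\hat{\R}$ of~(\ref{eq:22}) identifies every cluster point with a member of $\bar{\Theta}$. Since~(\ref{eq:4}) holds relative to any KKT point, a standard Fej\'er/Opial argument then upgrades subsequential to full-sequence convergence to some $(\bar{x},\bar{y},0,\bar{z},0)\in\bar{\Theta}$.

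The main obstacle I anticipate is the bookkeeping in part~(i): the relaxation step~(\ref{eq:b})--(\ref{eq:d}) means the inclusions are anchored at the genuine points $(y^k,z^k,x^k)$ while the energy $\phi_k$ lives at the relaxed points $(\tilde{y}^k,\tilde{z}^k,\tilde{x}^k)$, so every cross term carries a factor of $\rho$ or $2-\rho$ that must be shepherded carefully. The delicate point is verifying that, for $\rho\in(0,2)$, all the resulting coefficients---in particular $(2-\rho)$ and $\rho^{-1}(2-\rho)^2$---stay nonnegative and that the mixed multiplier/feasibility contribution assembles into the single square $\|x_e+\sigma(1-\rho)\A^*y_e\|^2$ rather than an indefinite form. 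This is precisely the difficulty flagged in the introduction, and is the reason the estimate cannot be quoted verbatim from the unrelaxed proximal ADMM.
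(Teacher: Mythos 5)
Your proposal is correct and follows essentially the same route as the paper, which gives no independent proof of this theorem but simply invokes \cite[Lemma~5.2, Theorem~5.1]{xiao2018generalized}, whose argument is exactly the one you describe: a one-step energy estimate obtained from the subproblem optimality conditions, the monotonicity inequalities with $\Sigma_f,\Sigma_g$, and the relaxation recursions, followed by a Fej\'er-monotonicity/cluster-point argument for convergence. Your identification of the bookkeeping around the relaxed points and the $\rho$, $2-\rho$ coefficients as the delicate step matches the difficulty the paper itself flags.
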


Theorem \ref{th:1} presents a global convergence result for p-GADMM under fairly general and mild conditions.  Evidently, one can choose positive semidefinite (and even indefinite) linear operators $\S$ and $\T$ to ensure $\Sigma _f+\S+\sigma\A\A^*\succ 0$ and $\Sigma _g+\T+\sigma\B\B^*\succ 0$. But, due to the existences of some coupling terms in (\ref{eq:4}), we 
must restrict $\S$ and $\T$ to be positive definite. In this case, the conditions $\Sigma _f+\S+\sigma\A\A^*\succ 0$ and $\Sigma _g+\T+\sigma\B\B^*\succ 0$ hold automatically.

We now present some notations to facilitate the later theoretical analysis.  For any self-adjoint linear operator $\G: \X\rightarrow\X$, we use the symbol $\lambda_{\max}(\G)$ to denote its largest eigen-value.
Denote
\begin{equation}\label{eq:8}
	\Upsilon(\nu^{k+1}):=\kappa\left(\left \| \tilde{y}^{k+1}-y^{k+1} \right \| _\S^2+\left \| \tilde{z}^{k+1}-z^{k+1} \right \|_\T^2+\left \| \A^*y_e^{k+1}+\B^*z_e^{k+1} \right \|^2+\left \| \A^*(y^{k+1}-y^k) \right \|^2\right),
\end{equation}
where
\begin{equation}\label{kapa}
	\kappa :=\max\bigg \{ \|\S \|, 3\|\T \|,3(2-\rho )^2\sigma ^2\lambda _{\max}(\B^*\B), 3(1-\rho )^2\sigma ^2\lambda _{\max}(\B^*\B)+1\bigg \}.
\end{equation}
Moreover, denote $\Xi:\V\rightarrow\V$ in the form of
\begin{equation*}
	\Xi:=\left(
	\begin{smallmatrix}
	(\sigma \rho)^{-1}\I & (1-\rho)\rho^{-1}\A^* &  0& 0 & 0\\ 
	(1-\rho)\rho^{-1}\A &\rho^{-1}\sigma\A\A^*+\rho^{-1}S+2\Sigma _f  & (1-\rho)\rho^{-1}\S & 0 &0 \\ 
	0&  (1-\rho)\rho^{-1}\S & \rho^{-1}\S & 0 & 0\\ 
	0 &  0&  0& \rho^{-1}\T+2\Sigma _g & (1-\rho)\rho^{-1}\T\\ 
	0 & 0 & 0 &   (1-\rho)\rho^{-1}\T&  (1-\rho)^2\rho^{-1}\T
\end{smallmatrix}
\right)
+\frac{1}{2}(2-\rho)\sigma\vartheta\vartheta^*,
\end{equation*}
where $\I$ is an identity operator, and $\vartheta :\X\rightarrow \V$ is a linear operator such that its adjoint $\vartheta^*$ satisfies $\vartheta^*(\nu)= \A^*y+\B^*z$ for any $\nu\in\V$. 
In the subsequent analysis, we use the operator $\Xi$ to measure the weighted distance from the current point to the generalized optimal solutions set $\bar{\Theta}$. Obviously, if $\rho\in(0,2)$ and $\S\succ 0$, $\T\succ 0$, then $\Xi$ must be positive definite, i.e.,
$$
\{\S\succ 0 \ \& \ \T\succ 0\}\; \Leftrightarrow \; \Xi \succ 0,
$$
which plays a key rule in the linear convergence rate result. 
 Additionally, to conduct the rate of the decrease of $\left \| \nu ^k-\bar{\nu} \right \|^2$, we take an interest in deducing an upper bound for $\hat{R}(\cdot)$ computed at the sequence generated by the p-GADMM in the subsequent developments.

\begin{lemma}\label{lemma1}
	Let $\left \{ \nu^k \right \}$ be the infinite sequence generated by p-GADMM. Then for any $k\geq1$, we have 
	\begin{equation}\label{ineq34}
		\Upsilon(\nu^{k+1})\geq \| \hat{\R}(\nu ^{k+1})  \|^2.
	\end{equation}
	\end{lemma}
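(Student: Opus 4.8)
The plan is to bound $\|\hat{\R}(\nu^{k+1})\|^2$ componentwise against the four squared terms collected in $\Upsilon(\nu^{k+1})$. Because the third and fifth entries of $\hat{\R}$ in (\ref{eq:22}) vanish identically, $\|\hat{\R}(\nu^{k+1})\|^2$ splits into exactly three pieces: the feasibility residual $\|\A^*y^{k+1}+\B^*z^{k+1}-c\|^2$, the $f$-residual $\|y^{k+1}-\textrm{Prox}_f(y^{k+1}+\A x^{k+1})\|^2$, and the $g$-residual $\|z^{k+1}-\textrm{Prox}_g(z^{k+1}+\B x^{k+1})\|^2$. For the first I would invoke $\A^*\bar y+\B^*\bar z=c$ to rewrite it exactly as $\|\A^*y_e^{k+1}+\B^*z_e^{k+1}\|^2$, which is one of the terms of $\Upsilon$; since $\kappa\geq1$ this piece is immediately dominated.

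For the $f$-residual I would write the optimality condition of the $y$-subproblem (\ref{eq:e}) and eliminate the penalty gradient using the multiplier update (\ref{eq:f}) at index $k+1$, which collapses it to $\A x^{k+1}+\S(\tilde y^{k+1}-y^{k+1})\in\partial f(y^{k+1})$, equivalently $y^{k+1}=\textrm{Prox}_f\big(y^{k+1}+\A x^{k+1}+\S(\tilde y^{k+1}-y^{k+1})\big)$. Since $\textrm{Prox}_f$ is nonexpansive, $\|y^{k+1}-\textrm{Prox}_f(y^{k+1}+\A x^{k+1})\|\leq\|\S(\tilde y^{k+1}-y^{k+1})\|$, and as $\S\succ0$ one has $\|\S v\|^2\leq\|\S\|\,\|v\|_\S^2$, so this piece is bounded by $\|\S\|\,\|\tilde y^{k+1}-y^{k+1}\|_\S^2$.

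The genuinely hard piece is the $g$-residual, and this is where the relaxed points intervene. The subproblem (\ref{eq:a}) produces $z^{k+1}$ from the stale data $y^k,x^k,\tilde z^{k+1}$, so its optimality condition yields an element $e_z\in\partial g(z^{k+1})$ expressed through $x^k$ and the residual $\A^*y^k+\B^*z^{k+1}-c$, whereas the KKT map evaluates $\textrm{Prox}_g$ at the \emph{updated} multiplier $x^{k+1}$. I would introduce $\hat x:=x^k-\sigma(\A^*y^k+\B^*z^{k+1}-c)$ so that $e_z=\B\hat x-\T(z^{k+1}-\tilde z^{k+1})$, giving $z^{k+1}=\textrm{Prox}_g(z^{k+1}+e_z)$ and, by nonexpansiveness, $\|z^{k+1}-\textrm{Prox}_g(z^{k+1}+\B x^{k+1})\|\leq\|e_z-\B x^{k+1}\|$. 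The crux is then to simplify $\hat x-x^{k+1}$: after substituting (\ref{eq:f}) at index $k+1$ together with the relaxation rules (\ref{eq:c}) and (\ref{eq:d}) for $\tilde x^{k+1}$ and $\tilde z^{k+2}$, and crucially rewriting the stale residual as
\[
\A^*y^k+\B^*\tilde z^{k+1}-c=(\A^*y_e^{k+1}+\B^*z_e^{k+1})-\A^*(y^{k+1}-y^k)-\B^*(z^{k+1}-\tilde z^{k+1}),
\]
I expect the $\B^*(z^{k+1}-\tilde z^{k+1})$ contributions to cancel and leave the clean identity
\[
\hat x-x^{k+1}=(\rho-1)\sigma(\A^*y_e^{k+1}+\B^*z_e^{k+1})+(2-\rho)\sigma\A^*(y^{k+1}-y^k).
\]
Verifying these cancellations for a general $\rho\in(0,2)$ is the main obstacle, since it is precisely the relaxation step that couples $x^{k+1}$ to the earlier iterates.

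With this identity, $e_z-\B x^{k+1}=(\rho-1)\sigma\B(\A^*y_e^{k+1}+\B^*z_e^{k+1})+(2-\rho)\sigma\B\A^*(y^{k+1}-y^k)-\T(z^{k+1}-\tilde z^{k+1})$, and I would finish using $\|a+b+c\|^2\leq3(\|a\|^2+\|b\|^2+\|c\|^2)$ together with $\|\B w\|^2\leq\lambda_{\max}(\B^*\B)\|w\|^2$ and $\|\T v\|^2\leq\|\T\|\,\|v\|_\T^2$. This bounds the $g$-residual by $3(1-\rho)^2\sigma^2\lambda_{\max}(\B^*\B)\,\|\A^*y_e^{k+1}+\B^*z_e^{k+1}\|^2+3(2-\rho)^2\sigma^2\lambda_{\max}(\B^*\B)\,\|\A^*(y^{k+1}-y^k)\|^2+3\|\T\|\,\|\tilde z^{k+1}-z^{k+1}\|_\T^2$. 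Adding the three pieces, the coefficient of $\|\A^*y_e^{k+1}+\B^*z_e^{k+1}\|^2$ becomes $1+3(1-\rho)^2\sigma^2\lambda_{\max}(\B^*\B)$, while those of $\|\tilde y^{k+1}-y^{k+1}\|_\S^2$, $\|\tilde z^{k+1}-z^{k+1}\|_\T^2$, and $\|\A^*(y^{k+1}-y^k)\|^2$ are $\|\S\|$, $3\|\T\|$, and $3(2-\rho)^2\sigma^2\lambda_{\max}(\B^*\B)$. Each of these is one of the candidates in the maximum defining $\kappa$ in (\ref{kapa}) and hence $\leq\kappa$, so by nonnegativity of every term the total is $\leq\kappa$ times the sum of the four terms, i.e. $\|\hat{\R}(\nu^{k+1})\|^2\leq\Upsilon(\nu^{k+1})$, as claimed.
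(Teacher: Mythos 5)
Your proposal is correct and takes essentially the same route as the paper's proof: the same prox-characterizations of the two subproblem optimality conditions (the paper's (\ref{eq:5}) and (\ref{eq:6})), the same key multiplier identity (\ref{eq:7}) — your computation of $\hat{x}-x^{k+1}$, including the cancellation of the $\B^*(z^{k+1}-\tilde{z}^{k+1})$ terms, checks out for all $\rho\in(0,2)$ — and the same three-term bound producing exactly the constants absorbed by $\kappa$ in (\ref{kapa}). The only difference is presentational: you package the stale residual via the auxiliary point $\hat{x}$, while the paper folds it into the subdifferential inclusion and invokes $x^k-x^{k+1}$ from (\ref{eq:7}); the algebra is identical.
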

\begin{proof}
	From (\ref{eq:c}) and (\ref{eq:f}), it is easy to see that
	\begin{equation*}
		\tilde{x}^{k+1}=x^k+(1-\rho )(\tilde{x}^k-x^k)=x^k+\sigma (1-\rho)(\A^*y_e^k+\B^*\tilde{z}_e^{k+1}).
	\end{equation*}
Then, substituting this equality into (\ref{eq:f}), we get
\begin{equation}\label{eq:7}
	\begin{split}
		x^{k+1}&=\tilde{x}^{k+1}-\sigma (\A^*y_e^{k+1}+\B^*\tilde{z}_e^{k+2})\\[2mm]
		&=x^k-\sigma \rho (\A^*y_e^{k+1}+\B^*z_e^{k+1})+\sigma (1-\rho)\A^*(y^{k}-y^{k+1}).
	\end{split}
\end{equation}
	By the first order optimality condition of (\ref{eq:a}), we have 
	\begin{equation*}
		\begin{split}
			0&\in\partial g(z^{k+1})-\B x^k+\sigma \B(\A^*y_e^k+\B^*z_e^{k+1})+\T(z^{k+1}-\tilde{z}^{k+1})\\[2mm]
			&=\partial g(z^{k+1})-\B\left (  x^k-\sigma (\A^*y_e^{k+1}+\B^*z_e^{k+1})+\sigma \A^*(y^{k+1}-y^k)\right )+\T(z^{k+1}-\tilde{z}^{k+1}),	
		\end{split}
	\end{equation*}
		which leads to an equivalent expression for $z^{k+1}$, that is,
		\begin{equation}\label{eq:5}
			z^{k+1}=\textrm{Prox}_g(z^{k+1}+\B\left (  x^k-\sigma (\A^*y_e^{k+1}+\B^*z_e^{k+1})+\sigma \A^*(y^{k+1}-y^k)\right )-\T(z^{k+1}-\tilde{z}^{k+1})).
		\end{equation}
	It follows from (\ref{eq:e}) we can easily get
	\begin{equation*}
		0\in \partial f(y^{k+1})-\A\tilde{x}^{k+1}+\sigma \A(\A^*y_e^{k+1}+\B^*\tilde{z}_e^{k+2})+\S(y^{k+1}-\tilde{y}^{k+1}),
	\end{equation*}
which, from (\ref{eq:f}), is equivalent to
\begin{equation*}
	0\in \partial f(y^{k+1})-\A x^{k+1}+\S(y^{k+1}-\tilde{y}^{k+1}).
\end{equation*}
Thus, we obtain that
\begin{equation}\label{eq:6}
	y^{k+1}=\textrm{Prox}_f\Big(y^{k+1}+\A x^{k+1}-\S(y^{k+1}-\tilde{y}^{k+1})\Big).
\end{equation}
Secondly, associating with the equations (\ref{eq:7}), (\ref{eq:5}) and (\ref{eq:6}) and using the Lipschitz continuity of Moreau-Yosida proximal mapping, we get from the definition of $\hat{\R}(\cdot)$ in (\ref{eq:22}) that
\begin{equation*}
	\begin{split}
		\left \| \hat{\R}(\nu ^{k+1}) \right \|^2\leq &\left \| \A^*y_e^{k+1}+\B^*z_e^{k+1} \right \|^2+\left \| \S(y^{k+1}-\tilde{y}^{k+1}) \right \|^2\\[2mm]
		&+\left \| \B(x^k-x^{k+1})-\sigma \B(\A^*y_e^{k+1}+\B^*z_e^{k+1})+\sigma \B\A^*(y^{k+1}-y^k)-\T(z^{k+1}-\tilde{z}^{k+1}) \right \|^2\\[2mm]
		=&\left \| \A^*y_e^{k+1}+\B^*z_e^{k+1} \right \|^2+\left \| \S(y^{k+1}-\tilde{y}^{k+1}) \right \|^2\\[2mm]
		&+\left \| (\rho -1)\sigma \B(\A^*y_e^{k+1}+\B^*z_e^{k+1})+(2-\rho )\sigma \B\A^*(y^{k+1}-y^k)-\T(z^{k+1}-\tilde{z}^{k+1}) \right \|^2\\[2mm]
		\leq&\left \| \A^*y_e^{k+1}+\B^*z_e^{k+1} \right \|^2+\left \| \S \right \|\left \| y^{k+1}-\tilde{y}^{k+1} \right \|_\S^2+3\left \| \T \right \|\left \| z^{k+1}-\tilde{z}^{k+1} \right \|_\T^2\\[2mm]
		&+3(1-\rho )^2\sigma ^2\lambda_{\max}(\B^*\B)\left \| \A^*y_e^{k+1}+\B^*z_e^{k+1} \right \|^2+3(2-\rho )^2\sigma ^2\lambda_{\max}(\B^*\B)\left \| \A^*(y^{k+1}-y^k) \right \|^2\\[2mm]
		\leq&\kappa(\left \| \A^*y_e^{k+1}+\B^*z_e^{k+1} \right \|^2+\left \| y^{k+1}-\tilde{y}^{k+1} \right \|_\S^2+\left \| z^{k+1}-\tilde{z}^{k+1} \right \|_\T^2+\left \| \A^*(y^{k+1}-y^k) \right \|^2).
	\end{split}
\end{equation*}
Using the definition of $\Upsilon(\cdot)$ in (\ref{eq:8}), we get the inequality (\ref{ineq34}).
\end{proof}

To get the local linear convergence rate of p-GADMM \ref{alg:2}, we need another assumption to control the distance from an iterate $\nu$ to the KKT solution set $\bar{\Theta}$.
\begin{assumption}\label{ass2}
	The inverse of the  mapping $\hat{\R}(\cdot)$ defined in (\ref{eq:22}) is calm at the $0\in\V$ for $\bar{\nu}$ with modulus $\lambda>0$ if there exists a constant $\varepsilon>0$ such that 
    $$
	\text{dist}(\nu, \bar{ \Theta })\leq \lambda \| \hat{\R}(\nu) \|, \; \; \; \forall \nu\in \Big\{ \nu\in\V| \| \nu-\bar{\nu}  \|\leq \varepsilon  \Big\}.
	$$
\end{assumption}
In light of above analysis, we are ready to establish the local linear convergence rate result of algorithm p-GADMM.
\begin{theorem}\label{th2}
	Suppose that Assumptions \ref{assumption1} and  \ref{ass2} hold. Besides, assume that $\S$ and $\T$ are positive definite. Let the sequence $\{(x^k, y^k, z^k; \tilde{x}^k,\tilde{y}^k,\tilde{z}^k) \}$ be generated by Algorithm p-GADMM. 	
	Then $\{ \nu^k :=(x^k, y^k, \tilde{y}^k-y^k, z^k, \tilde{z}^k-z^k) \}$  converges to $\bar{\nu} =(\bar{x}, \bar{y}, 0, \bar{z}, 0)\in\bar{\Theta}$, and there exists a threshold $\bar{\kappa}\geq1$ such that for all $k\geq \bar{\kappa}$, it holds that
	\begin{equation}\label{eq:14}
		\textrm{dist}_{\Xi}^2\left (\nu^{k+1}  ,\bar{\Theta }\right )\leq \alpha\textrm{dist}_{\Xi}^2\left (\nu^{k}  ,\bar{\Theta }\right ),
	\end{equation}
where
$$
\alpha:=(1+\beta)^{-1} \:\:\:\textrm{and}\:\:\:\beta:=(2-\rho)\min \left \{ 1,\frac{1}{2} \sigma ,\sigma \rho ^{-1}(2-\rho)\right \}\left(\lambda ^{2}\kappa\lambda_{\max}(\Xi) \right)^{-1},
$$
where $\kappa$ is defined in (\ref{kapa}).
Moreover, there exists a positive number $\zeta\in[\alpha,1)$ such that for all $k\geq\bar{\kappa}$
\begin{equation}\label{eq:15}
	\textrm{dist}_{\Xi}^2\left (\nu^{k+1}  ,\bar{\Theta }\right )\leq \zeta\textrm{dist}_{\Xi}^2\left (\nu^{k}  ,\bar{\Theta }\right ).
\end{equation}
\end{theorem}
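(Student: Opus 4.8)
The plan is to reduce the statement to a single one-step Fejér-type contraction and then convert a decrease of the KKT residual $\hat{\R}$ into a genuine decrease of the weighted distance via the calmness bound. The first thing I would do is verify that the operator $\Xi$ is engineered so that, for every $\bar\nu\in\bar\Theta$,
$$
\langle \nu^k-\bar\nu,\,\Xi(\nu^k-\bar\nu)\rangle \;=\; P_k + 2\|y_e^k\|_{\Sigma_f}^2+2\|z_e^k\|_{\Sigma_g}^2,
$$
where $P_k$ denotes the left-hand side of (\ref{eq:4}). This identity is the algebraic crux. It is obtained by completing the square in the $(x,y)$-block (so that $(\sigma\rho)^{-1}\|x_e^k\|^2+\cdots$ becomes $(\sigma\rho)^{-1}\|x_e^k+\sigma(1-\rho)\A^*y_e^k\|^2+(2-\rho)\sigma\|\A^*y_e^k\|^2$) and by substituting the relaxation relations $\tilde{y}_e^{k+1}=y_e^k+(1-\rho)(\tilde{y}^k-y^k)$ and $\tilde{z}_e^{k+1}=z_e^k+(1-\rho)(\tilde{z}^k-z^k)$, which collapse the $\S$-block into $\rho^{-1}\|\tilde{y}_e^{k+1}\|_\S^2+(2-\rho)\|y^k-\tilde{y}^k\|_\S^2$ and the $\T$-block into $\rho^{-1}\|\tilde{z}_e^{k+1}\|_\T^2$.

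Next I would turn (\ref{eq:4}) into a monotone recursion for the weighted distance. Writing $\Delta_{k+1}'$ for the four non-$\Sigma$ surplus terms on the right of (\ref{eq:4}), namely
$$
\Delta_{k+1}'=(2-\rho)\|\tilde{y}^{k+1}-y^{k+1}\|_\S^2+(2-\rho)\|\tilde{z}^{k+1}-z^{k+1}\|_\T^2+(2-\rho)\sigma\|\A^*y_e^{k+1}+\B^*z_e^{k+1}\|^2+\sigma\rho^{-1}(2-\rho)^2\|\A^*(y^{k+1}-y^k)\|^2,
$$
the identity above together with (\ref{eq:4}) and the nonnegativity of the $\Sigma_f,\Sigma_g$ terms gives $\langle\nu^k-\bar\nu,\Xi(\nu^k-\bar\nu)\rangle\ge\langle\nu^{k+1}-\bar\nu,\Xi(\nu^{k+1}-\bar\nu)\rangle+\Delta_{k+1}'$ for every $\bar\nu\in\bar\Theta$. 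Taking $\bar\nu$ to be the $\Xi$-projection of $\nu^k$ onto $\bar\Theta$ on the left, and bounding the first term on the right below by $\textrm{dist}_\Xi^2(\nu^{k+1},\bar\Theta)$, yields the Fejér-type inequality
$$
\textrm{dist}_\Xi^2(\nu^k,\bar\Theta)\;\ge\;\textrm{dist}_\Xi^2(\nu^{k+1},\bar\Theta)+\Delta_{k+1}'.
$$

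Then I would lower-bound $\Delta_{k+1}'$ by a multiple of $\textrm{dist}_\Xi^2(\nu^{k+1},\bar\Theta)$. Since $\Delta_{k+1}'$ is built from exactly the four quantities assembled in $\Upsilon(\nu^{k+1})$ of (\ref{eq:8}), factoring out the smallest coefficient gives $\Delta_{k+1}'\ge (2-\rho)\min\{1,\tfrac12\sigma,\sigma\rho^{-1}(2-\rho)\}\,\kappa^{-1}\,\Upsilon(\nu^{k+1})$, and Lemma \ref{lemma1} bounds this below by the same constant times $\|\hat\R(\nu^{k+1})\|^2$. By Theorem \ref{th:1}(ii) the iterates converge to some $\bar\nu\in\bar\Theta$, so there is a threshold $\bar\kappa$ beyond which $\nu^{k+1}$ lies in the $\varepsilon$-ball of Assumption \ref{ass2}; calmness then gives $\|\hat\R(\nu^{k+1})\|^2\ge\lambda^{-2}\textrm{dist}^2(\nu^{k+1},\bar\Theta)$, and the elementary estimate $\textrm{dist}_\Xi^2(\nu^{k+1},\bar\Theta)\le\lambda_{\max}(\Xi)\,\textrm{dist}^2(\nu^{k+1},\bar\Theta)$ upgrades this to $\|\hat\R(\nu^{k+1})\|^2\ge(\lambda^2\lambda_{\max}(\Xi))^{-1}\textrm{dist}_\Xi^2(\nu^{k+1},\bar\Theta)$. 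Chaining the three bounds produces $\Delta_{k+1}'\ge\beta\,\textrm{dist}_\Xi^2(\nu^{k+1},\bar\Theta)$ with the stated $\beta$, whence $\textrm{dist}_\Xi^2(\nu^k,\bar\Theta)\ge(1+\beta)\,\textrm{dist}_\Xi^2(\nu^{k+1},\bar\Theta)$, i.e. (\ref{eq:14}) with $\alpha=(1+\beta)^{-1}<1$; inequality (\ref{eq:15}) is then immediate by choosing any $\zeta\in[\alpha,1)$.

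The hard part will be the first step: pinning down the exact quadratic-form identity between $\Xi$ and the potential $P_k$ of (\ref{eq:4}). This is precisely where the relaxed variables bite, since the mixed blocks $(1-\rho)\rho^{-1}\S$, $(1-\rho)\rho^{-1}\T$ and the rank-one term $\tfrac12(2-\rho)\sigma\vartheta\vartheta^*$ must, after completing squares and substituting the relaxation recursions, reproduce the staggered index pattern ($\tilde{y}_e^{k+1},\tilde{z}_e^{k+1}$ measured against $y_e^k,z_e^k$) that appears in (\ref{eq:4}); a single sign or index slip there yields the wrong surplus $\Delta_{k+1}'$ and breaks the coefficient matching with $\Upsilon$. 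The remaining ingredients — the coefficient comparison against $\Upsilon$, Lemma \ref{lemma1}, the calmness error bound, and the $\lambda_{\max}(\Xi)$ conversion — are routine once that identity is secured.
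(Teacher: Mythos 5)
Your overall architecture is exactly the paper's: restate (\ref{eq:4}) as a Fej\'er-type contraction in the $\Xi$-weighted norm, pass to distances using convexity of $\bar\Theta$, lower-bound the surplus by coefficient matching against $\Upsilon(\nu^{k+1})$ from (\ref{eq:8}), then chain Lemma \ref{lemma1}, the calmness bound of Assumption \ref{ass2} (applicable for $k\geq\bar\kappa$ by the convergence in Theorem \ref{th:1}(ii)), and the conversion $\textrm{dist}^2(\nu,\bar\Theta)\geq\lambda_{\max}(\Xi)^{-1}\textrm{dist}_\Xi^2(\nu,\bar\Theta)$ to obtain the $(1+\beta)^{-1}$ contraction; this is precisely the paper's sequence (\ref{eq:13}), (\ref{eq:9}), (\ref{eq:10}), (\ref{eq:11}), (\ref{eq:12}).

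However, the identity you yourself flag as the crux is stated incorrectly, in exactly the way you warned against. Since $\Xi$ contains the block $\tfrac{1}{2}(2-\rho)\sigma\vartheta\vartheta^*$, the correct identity (which one verifies by the two completions of squares you describe, using $(2-\rho)\sigma+(1-\rho)^2\rho^{-1}\sigma=\rho^{-1}\sigma$ and $(2-\rho)+(1-\rho)^2\rho^{-1}=\rho^{-1}$) is
\begin{equation*}
\left\|\nu^k-\bar\nu\right\|_\Xi^2 \;=\; P_k+2\left\|y_e^k\right\|_{\Sigma_f}^2+2\left\|z_e^k\right\|_{\Sigma_g}^2+\tfrac{1}{2}(2-\rho)\sigma\left\|\A^*y_e^k+\B^*z_e^k\right\|^2,
\end{equation*}
whereas the left-hand side of (\ref{eq:4}) contains no $\|\A^*y_e^k+\B^*z_e^k\|^2$ term, so your version omits the last summand. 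Consequently your surplus $\Delta'_{k+1}$ is overstated: when $\|\nu^{k+1}-\bar\nu\|_\Xi^2$ is reconstituted on the right of (\ref{eq:4}), it absorbs half of the term $(2-\rho)\sigma\|\A^*y_e^{k+1}+\B^*z_e^{k+1}\|^2$, and only $\tfrac{1}{2}(2-\rho)\sigma\|\A^*y_e^{k+1}+\B^*z_e^{k+1}\|^2$ survives as surplus --- this splitting is exactly how the paper arrives at (\ref{eq:13}), and it is the reason $\tfrac{1}{2}\sigma$, rather than $\sigma$, appears in the min defining $\beta$. The slip is self-healing in your write-up: because your coefficient matching already uses $\min\{1,\tfrac{1}{2}\sigma,\sigma\rho^{-1}(2-\rho)\}$, the corrected (halved) surplus still dominates $\beta\,\textrm{dist}_\Xi^2(\nu^{k+1},\bar\Theta)$, and the rest of your argument coincides with the paper's. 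Indeed, under your stated identity the natural min would have been $\min\{1,\sigma,\sigma\rho^{-1}(2-\rho)\}$; the $\tfrac{1}{2}\sigma$ in your $\beta$ is the tell that the error sits in the identity, not in the conclusion. Once that term is restored, the proof is complete and matches the paper's.
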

\begin{proof}
From the part  $(i)$ of Theorem \ref{th:1}, it holds that
	\begin{equation*}
			\begin{split}
				&(\sigma \rho )^{-1}\left \| x_e^k+\sigma (1-\rho ) \A^*y_e^k\right \|^2+\rho ^{-1}\left \| \tilde{y}_e^{k+1} \right \|_\S^2+\rho ^{-1}\left \| \tilde{z_e}^{k+1} \right \|_\T^2+\left ( 2-\rho  \right )\sigma \left \| \A^*y_e^k \right \|^2\\[2mm]
				&+(2-\rho )\left \| y^k-\tilde{y}^k \right \|_\S^2+\frac{1}{2}(2-\rho )\sigma \left \| \vartheta^*(\nu_e^k)\right \|^2+2\left \| y_e^{k} \right \|_{\Sigma _f}^2+2\left \| z_e^{k} \right \|_{\Sigma _g}^2\\[2mm]
				\geq &(\sigma \rho )^{-1}\left \| x_e^{k+1}+\sigma (1-\rho ) \A^*y_e^{k+1}\right \|^2+\rho ^{-1}\left \| \tilde{y}_e^{k+2} \right \|_\S^2+\rho ^{-1}\left \| \tilde{z_e}^{k+2} \right \|_\T^2+(2-\rho )\left \| y^{k+1}-\tilde{y}^{k+1} \right \|_\S^2\\[2mm]
				&+\left ( 2-\rho  \right )\sigma \left \| \A^*y_e^{k+1} \right \|^2+2\left \| y_e^{k+1} \right \|_{\Sigma _f}^2+2\left \| z_e^{k+1} \right \|_{\Sigma _g}^2+\frac{1}{2}(2-\rho )\sigma \left \| \vartheta^*(\nu_e^{k+1})\right \|^2\\[2mm]
				&+(2-\rho )\left \| \tilde{y}^{k+1}-y^{k+1} \right \|_\S^2+(2-\rho )\left \| \tilde{z} ^{k+1}-z^{k+1}\right \|_\T^2\\[2mm]
				&+\sigma \rho ^{-1}(2-\rho )^2\left \| \A^*(y^{k+1}-y^k) \right \|^2+\frac{1}{2}(2-\rho )\sigma \left \| \A^*y_e^{k+1}+\B^*z_e^{k+1}\right \|^2,
			\end{split}
		\end{equation*}
	which implies that
	\begin{equation}\label{eq:13}
		\begin{split}
			\left \| \nu_e^k \right \|_\Xi ^2\geq &\left \| \nu_e^{k+1} \right \|_\Xi ^2+(2-\rho )\left (\left \| \tilde{y}^{k+1}-y^{k+1} \right \|_\S^2+ \left \| \tilde{z} ^{k+1}-z^{k+1}\right \|_\T^2\right.\\[2mm]
			&\left.+\sigma \rho ^{-1}(2-\rho )\left \| \A^*(y^{k+1}-y^k) \right \|^2+\frac{1}{2}\sigma \left \| \A^*y_e^{k+1}+\B^*z_e^{k+1}\right \|^2 \right ).
		\end{split}
	\end{equation}
Because $\bar{ \Theta }$ is a nonempty closed convex set, we can immediately get (\ref{eq:13}) to the following required result
\begin{equation}\label{eq:9}
	\begin{split}
		\text{dist}^2_\Xi(\nu^{k},\bar{\Theta})\geq &\:\text{dist}^2_\Xi(\nu^{k+1},\bar{\Theta})+(2-\rho )\left (\left \| \tilde{y}^{k+1}-y^{k+1} \right \|_\S^2+ \left \| \tilde{z} ^{k+1}-z^{k+1}\right \|_\T^2\right.\\
		&\left.+\sigma \rho ^{-1}(2-\rho )\left \| \A^*(y^{k+1}-y^k) \right \|^2+\frac{1}{2}\sigma \left \| \A^*y_e^{k+1}+\B^*z_e^{k+1}\right \|^2 \right ).
	\end{split}
\end{equation}
Observing the structure of right hand side of (\ref{eq:9}) and  the definition of $\Upsilon(\nu^{k+1})$ in (\ref{eq:8}), we know for all $k\geq1$ and $\rho\in(0,2)$ that 
\begin{equation}\label{eq:10}
	\begin{split}
			&\kappa\left(\left \| \tilde{y}^{k+1}-y^{k+1} \right \|_\S^2+ \left \| \tilde{z} ^{k+1}-z^{k+1}\right \|_\T^2+\sigma \rho ^{-1}(2-\rho )\left \| \A^*(y^{k+1}-y^k) \right \|^2+\frac{1}{2}\sigma \left \| \A^*y_e^{k+1}+\B^*z_e^{k+1}\right \|^2\right)\\
			\geq&\min \left \{ 1,\frac{1}{2} \sigma,\sigma \rho ^{-1}(2-\rho)\right \}\Upsilon(\nu^{k+1}).
			\end{split}
\end{equation}
According to part $(ii)$ of Theorem \ref{th:1}, we know that the sequence $\left \{ (x^k, y^k, \tilde{y}^k-y^k, z^k, \tilde{z}^k-z^k)\right \}$ converges to $\bar{\nu}^k =(\bar{x}, \bar{y}, 0, \bar{z}, 0)$, which means that there exists $\bar{\kappa}\geq1$ and $\varepsilon>0$ such that for any $k\geq\bar{\kappa}$, it holds that
\[\left \| \nu^{k+1}-\bar{\nu} \right \|\leq \varepsilon. \]
Subsequently, from Assumption \ref{ass2} and Lemma \ref{lemma1}, it gets for all $k\geq \bar{\kappa}$ that
\begin{equation}\label{eq:11}
	\text{dist}^2(\nu^{k+1},\bar{\Theta})\leq \lambda ^2\left \| \hat{\R}(\nu^{k+1}) \right \|^2\leq \lambda ^2\Upsilon(\nu^{k+1}).
\end{equation}
Combining (\ref{eq:10}) with (\ref{eq:11}) and using the fact $\rho\in(0,2)$, it yields that
\begin{equation}\label{eq:12}
	\begin{split}
		&(2-\rho)\left(\left \| \tilde{y}^{k+1}-y^{k+1} \right \|_\S^2+ \left \| \tilde{z} ^{k+1}-z^{k+1}\right \|_\T^2\right.\\
		&\left.+\sigma \rho ^{-1}(2-\rho )\left \| \A^*(y^{k+1}-y^k) \right \|^2+\frac{1}{2}\sigma \left \| \A^*y_e^{k+1}+\B^*z_e^{k+1}\right \|^2\right)\\
		\geq&(2-\rho)\min \left \{ 1,\frac{1}{2} \sigma ,\sigma \rho ^{-1}(2-\rho)\right \}\lambda ^{-2}\kappa ^{-1}\text{dist}^2(\nu^{k+1},\bar{\Theta})\\
		\geq&\beta\;\text{dist}^2_\Xi(\nu^{k+1},\bar{\Theta}).
	\end{split}
\end{equation}
From (\ref{eq:12}), we can derive the assertion (\ref{eq:14}).
Recalling that $\alpha<1$, this readily ensures the fulfillment of linear convergence rate (\ref{eq:15}).
\end{proof}

From Theorem \ref{th2},  we know that the conclusion of local linear convergence rate for Algorithm p-GADMM  relies on the calmness property. 
As discussed in section \ref{subsec2.2} that, for any set-valued mapping $\Psi$, the property of calmness at a certain point holds automatically if $\Psi$ is piecewise polyhedral, and particularly, $\Psi$ is a subdifferential mapping of a convex piecewise linear-quadratic function.
Based on the fact that $\Psi^{-1}$ is piecewise polyhedral if and only if $\Psi$ itself is piecewise polyhedral, 
we can get that when piecewise polyhedral condition is imposed on the  mapping $\hat{\R}(\cdot)$, then the global linear convergence rate of Algorithm p-GADMM can be derived. 

\begin{corollary}
	Let the sequence $\{(x^k, y^k, z^k; \tilde{x}^k,\tilde{y}^k,\tilde{z}^k) \}$ be generated by Algorithm p-GADMM. 		
	Let $\nu^k :=(x^k, y^k, \tilde{y}^k-y^k, z^k, \tilde{z}^k-z^k)$ and  $\rho\in(0,2)$, and let $\bar{\nu} =(\bar{x}, \bar{y}, 0, \bar{z}, 0)\in\bar{\Theta}$ be any limiting point of $\left \{ \nu^{k} \right \}$. Suppose that the solution set to the KKT system (\ref{eq:3}) is nonempty and that $\S$ and $\T$ are positive definite. Besides, suppose that the mapping $\hat{\R}(\cdot)$ is piecewise polyhedral. Then, there exists a constant $\hat{\lambda }>0$ such that for all $k\geq 1$ ,
	\begin{equation}\label{eq:37}
		\textrm{dist}\left ( \nu^k,\bar{\Theta } \right )\leq \hat{\lambda } \| \R\left ( \nu^k \right ) \|,
	\end{equation}
	and
	\begin{equation}\label{eq:38}
		\textrm{dist}_{\Xi}^2\left (\nu^{k+1}  ,\bar{\Theta }\right )\leq  \hat{\alpha}\textrm{dist}_{\Xi}^2\left (\nu^{k}  ,\bar{\Theta }\right ),
	\end{equation}
	where 
	$$
	\hat{\alpha}:=(1+\hat{\beta})^{-1} \:\:\:\textrm{and}\:\:\:\hat{\beta}:=(2-\rho)\min \left \{ 1,\frac{1}{2} \sigma ,\sigma \rho ^{-1}(2-\rho)\right \}\left(\hat{\lambda} ^{2}{\kappa}\lambda_{\max}(\Xi) \right)^{-1}.
	$$
\end{corollary}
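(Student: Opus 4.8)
The plan is to follow the same two-stage route as in the proof of Theorem \ref{th2}, but to replace the local calmness hypothesis (Assumption \ref{ass2}) by an error bound that holds along the whole sequence with a single modulus, a strengthening made possible by the piecewise polyhedral structure of $\hat{\R}(\cdot)$. First I would establish (\ref{eq:37}), and then I would reuse the machinery of Theorem \ref{th2} verbatim, with $\hat{\lambda}$ in place of $\lambda$, to deduce (\ref{eq:38}).

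For (\ref{eq:37}): since $\hat{\R}(\cdot)$ is assumed piecewise polyhedral, and since a mapping $\Psi$ is piecewise polyhedral if and only if $\Psi^{-1}$ is, the inverse $\hat{\R}^{-1}(\cdot)$ is also piecewise polyhedral. Invoking the proposition of Robinson \cite{robinson1981some} quoted in Subsection \ref{subsec2.2}, a piecewise polyhedral multifunction is locally upper Lipschitzian at every point of its domain with one common modulus $\hat{\lambda}>0$ that is independent of the base point. Applying this at the point $0\in\V$, and recalling from Subsection \ref{sec2.1} that $\hat{\R}^{-1}(0)=\bar{\Theta}$, we get $\hat{\R}^{-1}(w)\subseteq\bar{\Theta}+\hat{\lambda}\|w\|\mathbf{B}_\V$ for $w$ near $0$; taking $w=\hat{\R}(\nu^k)$ together with $\nu^k\in\hat{\R}^{-1}(\hat{\R}(\nu^k))$ yields $\textrm{dist}(\nu^k,\bar{\Theta})\leq\hat{\lambda}\|\hat{\R}(\nu^k)\|$. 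The essential gain over Assumption \ref{ass2} is that $\hat{\lambda}$ is uniform rather than valid only on the $\varepsilon$-ball around $\bar{\nu}$, so the estimate is available for every $k\geq1$ rather than only for $k\geq\bar{\kappa}$. The only point needing care is covering the at most finitely many early iterates at which $\hat{\R}(\nu^k)$ may lie outside the neighborhood of $0$ in which the estimate is stated: since $\nu^k\to\bar{\nu}$ and $\hat{\R}$ is continuous with $\hat{\R}(\bar{\nu})=0$, one has $\hat{\R}(\nu^k)\to0$, so all tail iterates are covered, and for the finitely many exceptional indices with $\hat{\R}(\nu^k)\neq0$ one enlarges $\hat{\lambda}$ to the finite maximum of the ratios $\textrm{dist}(\nu^k,\bar{\Theta})/\|\hat{\R}(\nu^k)\|$, while any index with $\hat{\R}(\nu^k)=0$ satisfies $\nu^k\in\bar{\Theta}$ and makes (\ref{eq:37}) trivial.

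With (\ref{eq:37}) in hand, the derivation of (\ref{eq:38}) mirrors Theorem \ref{th2} exactly. Part $(i)$ of Theorem \ref{th:1} gives the one-step descent (\ref{eq:13}), which passes to the distance form (\ref{eq:9}) because $\bar{\Theta}$ is nonempty, closed and convex. Combining the residual on the right of (\ref{eq:9}) with the definition (\ref{eq:8}) of $\Upsilon(\nu^{k+1})$, with Lemma \ref{lemma1} (which supplies $\Upsilon(\nu^{k+1})\geq\|\hat{\R}(\nu^{k+1})\|^2$), and with the global bound (\ref{eq:37}), one obtains $\textrm{dist}^2(\nu^{k+1},\bar{\Theta})\leq\hat{\lambda}^2\,\Upsilon(\nu^{k+1})$, and hence a lower bound of the form $\hat{\beta}\,\textrm{dist}^2_\Xi(\nu^{k+1},\bar{\Theta})$ for that residual after passing from the Euclidean distance to the $\Xi$-distance through $\lambda_{\max}(\Xi)$. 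Substituting into (\ref{eq:9}) produces (\ref{eq:38}) with $\hat{\alpha}=(1+\hat{\beta})^{-1}$, now valid for all $k\geq1$ precisely because (\ref{eq:37}) is available for all $k\geq1$.

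I expect the main obstacle to be the first stage, namely arguing that piecewise polyhedrality upgrades the local calmness estimate into a bound valid along the entire sequence with a single modulus. Robinson's proposition delivers upper Lipschitz continuity at each fixed point with a uniform modulus, but it is a local statement, so one must be explicit that the estimate centred at $0$ absorbs all tail iterates via $\hat{\R}(\nu^k)\to0$ and then fold the finitely many early iterates into the constant $\hat{\lambda}$. Once this global error bound is secured, the remainder is mechanical and reproduces the computation of Theorem \ref{th2} unchanged.
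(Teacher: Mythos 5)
Your proposal is correct and takes essentially the same approach as the paper: piecewise polyhedrality of $\hat{\R}$ (equivalently of $\hat{\R}^{-1}$, via Robinson's uniform upper Lipschitz property at $0$ with $\hat{\R}^{-1}(0)=\bar{\Theta}$) yields the local error bound, the finitely many iterates with large residual are absorbed into an enlarged constant $\hat{\lambda}$, and the argument of Theorem \ref{th2} is then rerun verbatim for all $k\geq 1$. The only cosmetic difference is how the exceptional iterates are folded in: the paper uses $\textrm{dist}(\nu^k,\bar{\Theta})\leq\|\nu^k-\bar{\nu}\|\leq\varepsilon<\varepsilon\delta^{-1}\|\hat{\R}(\nu^k)\|$ and sets $\hat{\lambda}:=\max\{\lambda,\varepsilon\delta^{-1}\}$, whereas you take the finite maximum of the ratios $\textrm{dist}(\nu^k,\bar{\Theta})/\|\hat{\R}(\nu^k)\|$ over those indices — the same idea.
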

\begin{proof}
	On the one hand, from the nonempty set $\bar{\Theta}$ and the piecewise polyhedral condition, there exist fixed $\lambda>0$ and $\delta>0$ such that 
	$$
	\textrm{dist}(\nu^k,\bar{\Theta})\leq\lambda \| \hat{\R}(\nu^k)  \|
	$$
	if $ \| \hat{\R}(\nu^k)\| \leq \delta$ .
	On the other hand, following the proof of Theorem \ref{th2}, for all $k\geq1$, we know that  $ \{\nu^k:= (x^k, y^k, \tilde{y}^k-y^k, z^k, \tilde{z}^k-z^k)  \}$  converges to $\bar{\nu} =(\bar{x}, \bar{y}, 0, \bar{z}, 0)$ while $\left \| \nu^k-\bar{\nu} \right \|\leq \varepsilon$ with constant $\varepsilon>0$. For the $\nu^k$ satisfying $\| \hat{\R}(\nu^k) \|>\delta$, we get 
	\[\textrm{dist}(\nu^k, \bar{\Theta })\leq\left \| \nu^k-\bar{\nu} \right \|\leq\varepsilon<\varepsilon\delta^{-1} \| \hat{\R}(\nu^k) \|.\]
	We readily obtain that there exists a positive number $\hat{\lambda}:=\max\left \{ \lambda, \varepsilon\delta^{-1} \right \}$ such that (\ref{eq:37}) holds. 
	Employing a similar proof of Theorem \ref{th2} yields the inequality (\ref{eq:38}), thereby the global linear convergence rate holds.
\end{proof}
At the end of this section, we notice that the assumption on $\hat{\R}$  implies that the condition $\textrm{dist}\left ( \nu^k,\bar{\Theta } \right )\leq \hat{\lambda } \| \R\left ( \nu^k \right ) \|$ hold automatically. Therefore, the global linear convergence rate can be achieved.

\section{Conclusion}\label{sec4}

We know that the method of p-GADMM proposed by Xiao, Chen \& Li \cite{xiao2018generalized}  is highly efficient for convex composite programming problems. The global convergence of p-GADMM is known, but its convergence rate is worthy of exploring.
This paper was devoting to provide a theoretical analysis and proved that p-GADMM has Q-linear convergence rate under the assumption that the KKP mapping is calm. 
This conclusion is consistent with the theoretical result of Han et al. \cite{Han2018LinearRC} to  the semi-proximal ADMM of Fazel et al. \cite{Fazel2013HankelMR}.
Nevertheless, it is still worth emphasizing that Theorem \ref{th2} requires  $\S$ and $\T$ being positive definite, which is slightly stronger than the one in \cite{Han2018LinearRC}. Despite this, we believe that this condition will not affect the contribution of this paper because it actually common in vast majority of practical implementations.

\section{Acknowledgments}
The work of Y. Xiao is supported by the National Natural Science Foundation of China (Grant
No. 11971149).

\bibliography{references3}  
\bibliographystyle{plain}
\end{document}